\newcommand{\deactivateaddcontentsline}{%
  \let\addcontentslineoriginal\addcontentsline
  \renewcommand\addcontentsline[3]{}%
}
\newcommand{\reactivateaddcontentsline}{%
  \let\addcontentsline\addcontentslineoriginal
}
\newtheorem{theorem}{Theorem}
\newtheorem{proposition}{ Proposition}
\newtheorem{lemma}{ Lemma}
\newtheorem{corollary}{Corollary}
\newtheorem{definition}{Definition}
\theoremstyle{remark}
\def \1{\mathbb {1}}
\def \RM{\mathbb {R}}
\def \NM{\mathbb{N}}
\def \ZM{\mathbb{Z}}
 \def \Vol {{\rm Vol}}
\def \p {{\rm exp\,}}
\def \d{\partial}
\def\dt{\delta} 
\def\a{\alpha}
\def\b{\beta}
\def\e{\varepsilon}  
\def\g{\gamma}
\def\l{\lambda}
\def\L{\Lambda}
\def\p{\varphi}  
\def\lb{\left\{}
\def\rb{\right\}}
\def\G{\Gamma}   
\def \s{\sigma}
\def \to{\longrightarrow} 
\def \w{\wedge}
\def \< {{\thengle }}
\def \> {{\rangle }}
\def \( {\left( }
\def \) {\right) }
\newcommand{\Ct}{{\mathcal C}}
\newcommand{\Lt}{{\mathcal L}}
\title[Arithmetic density]{Arithmetic density}
\author{  Mauricio  Garay}
\address{ Institut für Mathematik\\
FB 08 - Physik, Mathematik und Informatik\\
Johannes Gutenberg-Universität Mainz\\
Staudinger Weg 9\\
55128 Mainz.}
\begin{document}
\begin{abstract}{ We define arithmetic classes as subsets of $\RM^n$ consisting of vectors which are not better approximated by the integer lattice than a given sequence. We give measure estimates for such sets.}
\end{abstract}

\maketitle

\deactivateaddcontentsline
\section{Introduction}
In the study of dynamical systems, one frequently consider deformations over completely discontinuous subsets.  The KAM theorem provides a typical example, where invariant tori of a hamiltonian system are parametrised by diophantine frequency vectors~\cite{Arnold_KAM,Kolmogorov_KAM,Moser_KAM}. 

Such vectors are defined by the existence of constants $C,\tau$ such that
$$| (v,i)| \geq \frac{C}{\| i \|^\tau},\ \forall i \in \ZM^n \setminus \{ 0 \} . $$
For fixed $\tau$, one can easily prove that the set 
$$\Omega_\tau=\{ v \in \RM^n: \exists C, \forall i \in \ZM^n \setminus \{ 0 \}, | (v,i)| \geq \frac{C}{\| i \|^\tau} \} $$
is of full measure. Invariant tori are parametrised by  the preimage of some $\Omega_{\tau}$ by a map~:
$$f:\RM^d \to \RM^n ,$$
called the {\em frequency mapping}.

In the original situation considered by Kolmogorov, the map $f$ was a local diffeomorphism so that the preimage by $f$ of $\Omega_{\tau}$ is also a full-measure set. In the late sixties, Arnold and Pyartli  gave sufficient conditions,  in case the image of  $f$ defines a submanifold in $\RM^n$, to ensure that $f^{-1}(\Omega_\tau)$ is still a full measure set~\cite{Pyartli}. This result led Rüssmann to a generalisation of the KAM theorem~\cite{Russmann_KAM}. In a series of works, Dani, Kleinbock, Margulis et al. introduced discrete group theoretic techniques in diophantine approximation giving major improvements of the Arnold-Pyartli theorem~\cite{Dani,Kleinbock,Kleinbock_Margulis,Margulis}. In this paper, we show that these techniques are efficient
not only for studying the sets $\Omega_\tau$ but also its closed subsets
 $$\Omega_{C,\tau}=\{ v \in \RM^n: \forall i \in \ZM^n \setminus \{ 0 \}, | (v,i)| \geq \frac{C}{\| i \|^\tau} \} $$
 for fixed $C>0$. 
 
In KAM theory,  this constant $C$ is related to the perturbation parameter and this relation leads to intricate estimates. It seems, however, difficult to fix a constant $C$ since  some of the subsets  $\Omega_{C,\tau}$ might have locally zero measure and even contain isolated segments. Hence, measure estimates for $\Omega_{C,\tau}$'s appear, at first glance, impossible to obtain. 

However, if we compare the sets $\Omega_{C,\tau}$ between themselves then one can prove that, for $\tau'$ big enough, the set  $\Omega_{C,\tau'}$ has density one at the points of $\Omega_{C,\tau}$. This is the starting observation of this paper. Then, it turns out that the group theoretic techniques of diophantine analysis can be efficiently applied in the local situation, so one may show that this property is preserved by a large class of mappings. This provides the density estimate which forms the subject of this paper. Applications of our theorem to hamiltonian dynamics will be published elsewhere.

\section{Statement of the theorem}
We denote by $U \subset \RM^d$ an open neighbourhood of the origin.
\begin{definition}[\cite{Kleinbock,Kleinbock_Margulis,Pyartli}] A $C^l$-map
$$f:U \to \RM^n,\ x=(x_1,\dots,x_d) \mapsto (f_1(x),\dots,f_n(x))$$
is called $l$-curved in a vector subspace $V \subset \RM^n$ at a point $x \in \RM^d$  if~:
\begin{enumerate} [{\rm i)}]
\item $f(U)=f(x)+V$ ;
\item the partial derivatives $\{ \d^j f(x): |j | \leq l \}$ generate the vector-space $V$ where $j=(j_1,j_2\dots, j_d)$ is a multi-index and $|j |=j_1+j_2+\dots+j_d$.
\end{enumerate}
\end{definition}
Such mappings are also called {\em non-degenerate}. Sometimes we omit to specify $V$ or $l$.  

Let us now define the subsets of $\RM^n$ that we wish to consider. Denote by $(\cdot,\cdot)$ the euclidean scalar product in $\RM^n$.  For any vector $\a \in \RM^n $, we define the sequence $\s(\a)$ by~:
$$\s(\a)_k :=\min \{ |(\a,i)|: i \in \ZM^n \setminus \{ 0 \}, \| i \| \leq 2^k \} .$$
\begin{definition} The arithmetical class in $\RM^n$ associated to a real decreasing sequence $a=(a_k)$ is the set
$$\Ct(a):=\{\a \in \RM^n: \s(\a)_k  \geq  a_k\} .$$
\end{definition}
Arithmetical classes are closed subsets in $\RM^n$ with the property~:
$$\a \in \Ct(a) \implies \l \a \in \Ct(a),\ \forall \l>1 $$

The arithmetical class in $\RM^n$ associated to the sequence $(C2^{-\tau n})$ is the set $\Omega_{C,\tau}$ of $(C,\tau)$-diophantine vectors.
 By Dirichlet's theorem, for any $C>0,\ \tau \leq n$, it is empty~\cite{Dirichlet} (see also \cite{Cassels}). On the other extreme, for $\tau > n$, the union of arithmetical classes associated to the sequence $(C2^{-\tau n})$  over different values of $C$ defines a set of full measure. Similar dichotomy holds for submanifolds of $\RM^n$~\cite{Kleinbock_dichotomy}.

If instead of a countable union, we fix {\em one} arithmetical class then it is a closed subset. Therefore it cannot be of full measure unless it is equal to $\RM^n$ itself. The theorem below states that it is nevertheless of positive measure near some points and that this property is preserved
by  curved mappings. 

Let us  recall the definition of density, in the measure theoretical sense.  For $\a \in \RM^n$, we denote by $B(\a,r)$ the ball centred at $\a$ with radius~$r$. The {\em density} of a measurable subset $K \subset \RM^n$ at a point $\a$ is the limit~(if it exists)~:
$$\lim_{r \to 0} \frac{\Vol(K \cap B(\a,r))}{\Vol(B(\a,r))}.$$

The density of a measurable subset is equal to $1$ at almost all of its point~\cite{Lebesgue_derivation}. For instance, sets of zero Lebesgue measure have density equal to one at {\em almost all points} and, in fact, equal to zero at {\em all points}.

 \begin{theorem}
\label{T::arithmetique}
Consider a real positive decreasing sequence $a=(a_k)$. Define the sequence $ a'=(a_k')$ by
$$a_k':= 2^{-kn-kl(n+1)-(n+1)^2 kdl}a_k^{(n+1)l} $$
 For any $l$-curved mapping
 $$f=(f_1,\dots,f_n):U \to \RM^n,\  f(0) =\a \in \Ct(a) $$
 the density of the set $f^{-1}(\Ct(a'))$ at the origin is equal to $1$. 
\end{theorem}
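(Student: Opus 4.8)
The plan is to pass to complements and show that the bad set $B(0,r)\setminus f^{-1}(\Ct(a'))$ occupies a vanishing fraction of $B(0,r)$ as $r\to 0$. First I would unwind the definition: a point $\b\in\RM^n$ fails to lie in $\Ct(a')$ exactly when $|(\b,i)|<a'_{k_0(i)}$ for some $i\in\ZM^n\setminus\{0\}$, where $k_0(i)=\lceil\log_2\|i\|\rceil$; here I use that $a'$ is decreasing, so among the constraints attached to a fixed $i$ the binding one sits at its own dyadic level. Writing $g_i(x):=(f(x),i)$, this exhibits the bad set as
\[
f^{-1}(\RM^n\setminus\Ct(a'))\cap B(0,r)=\bigcup_{k\geq 0}\ \bigcup_{2^{k-1}<\|i\|\leq 2^k}\{\,x\in B(0,r):|g_i(x)|<a'_k\,\},
\]
so that by subadditivity it suffices to bound each slab and sum, noting that the shell $2^{k-1}<\|i\|\leq 2^k$ contains $O(2^{kn})$ integer vectors.

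The hypothesis $\a=f(0)\in\Ct(a)$ enters in two ways, both through the identity $g_i(0)=(\a,i)$. Since $\|i\|\leq 2^k$ forces $|(\a,i)|\geq\s(\a)_k\geq a_k$, I get the lower bound $\|g_i\|_{B(0,r)}\geq a_k$. The same inequality controls the low frequencies: as $f$ is $C^1$, the oscillation of $g_i$ on $B(0,r)$ is at most $\|i\|\sup_B\|Df\|\,r\lesssim 2^k r$, so slabs at level $k$ are empty once $2L\,2^k r<a_k-a'_k$, where $L=\sup_{B(0,r_0)}\|Df\|$. Since $a'_k<a_k$ in the relevant range, this excludes all levels below a threshold $K(r)$ that tends to infinity as $r\to 0$; thus only the tail $k>K(r)$ survives.

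For the surviving slabs I would invoke the sublevel-set estimate for non-degenerate maps underlying the Kleinbock--Margulis method~\cite{Kleinbock_Margulis}. First, $l$-curvedness says the derivatives $\{\d^j f(0):|j|\leq l\}$ span $\RM^n$, so the linear map $i\mapsto((\d^j f(0),i))_{|j|\leq l}$ is injective; by compactness of the sphere and continuity of the derivatives this persists on a fixed ball $B(0,r_0)$, giving $\max_{|j|\leq l}|(\d^j f(x),i)|\geq c\|i\|$ for all $x\in B(0,r_0)$, uniformly in $i$. The good-function lemma then yields a uniform estimate
\[
\Vol\{\,x\in B(0,r):|g_i(x)|<\e\,\}\leq C\left(\frac{\e}{\|g_i\|_{B(0,r)}}\right)^{\b_0}\Vol(B(0,r)),
\]
with exponent $\b_0=1/((n+1)dl)$ and a constant $C=C(n,d,l)$ independent of $i$, because both the non-degeneracy bound $c\|i\|$ and the $C^l$-size of $g_i$ scale linearly in $\|i\|$.

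Combining these with $\e=a'_k$ and $\|g_i\|_B\geq a_k$ bounds the bad measure by $C\,\Vol(B(0,r))\sum_{k>K(r)}2^{kn}(a'_k/a_k)^{\b_0}$. The precise shape of $a'_k$ is reverse-engineered so that $2^{kn}(a'_k/a_k)^{\b_0}\leq 2^{-k}$: raising $a'_k/a_k=2^{-kn-kl(n+1)-(n+1)^2kdl}a_k^{(n+1)l-1}$ to the power $\b_0$ and multiplying by $2^{kn}$ leaves a negative power of $2^k$ together with a nonnegative power of $a_k\leq a_0$. Hence the series converges, its tail over $k>K(r)$ tends to $0$ as $r\to 0$, and the bad set has density $0$ at the origin; equivalently $f^{-1}(\Ct(a'))$ has density $1$ there. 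I expect the main obstacle to be this third step: extracting the good-function estimate with constant and exponent uniform across all $i$ and all small balls, and then threading the bookkeeping so that the powers of $2$ and of $a_k$ produced by $\b_0$ match exactly those built into the definition of $a'_k$.
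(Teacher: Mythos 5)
Your proposal keeps the paper's outer skeleton---write the complement of $\Ct(a')$ as a union of bands indexed by $i \in \ZM^n\setminus\{0\}$ at dyadic levels, use $f(0)=\a\in\Ct(a)$ to show that all bands of level $k\leq K(r)$ miss $f(B(0,r))$ with $K(r)\to\infty$ as $r\to 0$ (your oscillation bound in the source is the paper's distance-plus-mean-value argument in the target), then estimate each surviving band and sum over shells of cardinality $O(2^{kn})$---but the heart of the argument is genuinely different. The paper's per-band estimate (Proposition~\ref{P::KM}) comes from the Kleinbock--Margulis lattice machinery: Schmidt's lattice $[f(x)]$, the diagonal flow $g_t$ (Lemma~\ref{L::arithmetique}), quantitative non-divergence (Theorem~\ref{T::KM}), and a curvature lower bound $\sup_{x\in B(0,r)}\|h_t(x)\G\|\geq Ar^l$; that estimate uses only curvature, carries the side conditions $(a\|i\|^n)^{1/(n+1)}\leq Ar^l$ and $a\leq\|i\|$, and the arithmetic hypothesis on $\a$ re-enters later to convert the $r^{-1/d}$ factor into powers of $2^k$ and $a_k$. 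You instead apply the scalar sublevel (``good function'') estimate directly to $g_i=(f(\cdot),i)$ and feed it the lower bound $\|g_i\|_{B(0,r)}\geq |g_i(0)|=|(\a,i)|\geq a_k$, which the hypothesis $\a\in\Ct(a)$ gives for free. Granting the uniform sublevel estimate, your bookkeeping closes: $2^{kn}(a'_k/a_k)^{\b_0}\leq 2^{-k}$ up to a bounded power of $a_0$, and the tail over $k>K(r)$ vanishes. This shortcuts the entire lattice section and all side-condition verifications; what the paper's heavier route buys is an estimate independent of the arithmetic quality of the center point, which is the tool needed for almost-everywhere statements but is more than this theorem requires. (A minor point: the good-function lemma naturally yields exponent $1/dl$, not $1/((n+1)dl)$; since all ratios involved are $\leq 1$, the weaker exponent $\b_0$ follows, and this is what matches the definition of $a'$.)

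There is, however, one genuine gap: your justification of the uniform sublevel estimate. You derive it from the claim that $\max_{|j|\leq l}|(\d^j f(x),i)|\geq c\|i\|$ uniformly on a small ball. This is correct only when $f$ is curved in $V=\RM^n$. The theorem allows $f$ to be $l$-curved in a proper subspace $V\subsetneq\RM^n$ (condition (i) of the definition is $f(U)=f(x)+V$), and then every derivative $\d^jf(x)$ with $|j|\geq 1$ lies in $V$. If $\dim V^\perp\geq 2$ and $i$ is an integer vector in $V^\perp$, all derivative terms vanish and what remains is $|(f(x),i)|=|(\a,i)|$, which by Dirichlet's theorem is $O(1/\|i\|)$ along a sequence of such $i$; so the claimed bound fails badly. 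For those particular $i$ the band is empty ($g_i$ is constant, $\geq a_k$), but for integer vectors merely close to $V^\perp$ with $(\a,i)$ small---which exist, and whose bands need not be empty---neither your lemma nor any triviality applies, so the gap is not vacuous. The repair is exactly the paper's Corollary~\ref{C::KM} (Kleinbock--Margulis, Proposition 3.4): all functions $c_0+c_1f_1+\dots+c_nf_n$ are of $(C,\tau)$-class on a fixed neighbourhood, with $C,\tau$ uniform in $(c_0,\dots,c_n)$; there the component of $i$ orthogonal to $V$ is absorbed into the constant term $c_0$, and uniformity comes from compactness of the sphere $\|c\|=1$ rather than from a pointwise bound $c\|i\|$. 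Your $g_i$ is the case $c_0=0$, $c=i$, so substituting Corollary~\ref{C::KM} for your spanning argument leaves the rest of your proof unchanged. (One soft spot you share with the paper: both arguments need $a'_k<a_k$ at every level, which is why the paper normalises $a_k\leq 1$.)
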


 \section{Functions of class $(C,\tau)$.}
 \label{SS::Calpha}
 
Our first aim consists in proving the
\begin{proposition}
   \label{P::KM}
    For any  $l$-curved mapping
   $$f=(f_1,\dots,f_n):B(0,3R) \subset U \to \RM^n, f(0) \neq 0  $$
   there exist constants $A,C,r_0 >0$   such that
   $$\Vol(\{ x \in B(0,r): |(f(x),i)| \leq a \}) \leq C \left(a \| i \|^n \right)^{1/dl(n+1)} r^{-1/d}\Vol(B(0,r)) $$
 for any $r \leq r_0$ and any $a \in \RM_+,i \in \ZM^n \setminus \{ 0 \}$ which satisfy $$\left\{ \begin{matrix} \left(a\| i \|^n\right)^{\frac{1}{n+1}}& \leq &Ar^l\ ; \\  a & \leq & \| i \|.\end{matrix} \right. $$
 \end{proposition}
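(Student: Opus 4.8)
The plan is to fix $i\in\ZM^n\setminus\{0\}$, reduce to the scalar function $g_i(x):=(f(x),i)$, and bound $\Vol(\{x\in B(0,r):|g_i(x)|\le a\})$ using two consequences of $l$-curvature that hold \emph{uniformly in} $i$. The first is that $g_i$ is a \emph{good} function on $B(0,r_0)$, in the sense of Kleinbock--Margulis, with exponent $\frac{1}{dl}$. The second is the lower bound $\sup_{x\in B(0,r)}|g_i(x)|\ge c\,\|i\|\,r^l$ for all $r\le r_0$, with $c>0$ independent of $i$. Granting these, the stated inequality follows by bookkeeping.

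\emph{Good property (the main obstacle).} Recall that $g_i$ is $(C,\a)$-good on $B(0,r_0)$ if $\Vol(\{x\in B':|g_i|<\e\sup_{B'}|g_i|\})\le C\e^\a\Vol(B')$ for every ball $B'\subset B(0,r_0)$ and every $\e>0$. I would prove this with $\a=\frac{1}{dl}$ and $C$ independent of $i$, following the references cited in the definition: restricting $g_i$ to a line reduces it to one real variable, where a $C^l$ function, some derivative of which (of order $\le l$) is bounded below, is $(C,\frac1l)$-good, and the passage to $d$ variables degrades the exponent to $\frac1{dl}$. The delicate point, and the step I expect to be hardest, is uniformity in $i$: because $f$ is $l$-curved the derivatives $\{\partial^j f(0):|j|\le l\}$ span $\RM^n$, so $u\mapsto\max_{|j|\le l}|(\partial^j f(0),u)|$ is positive on the unit sphere and therefore bounded below by compactness; continuity of the derivatives then furnishes a single radius $r_0$ and a single constant $C$ serving every direction $i/\|i\|$.

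\emph{Supremum bound.} With $u=i/\|i\|$, non-degeneracy gives some $k\le l$ with $|(\partial_u^k f(0),i)|\ge c_0\|i\|$, and by continuity $|(\partial_u^k f(x),i)|\ge\frac12 c_0\|i\|$ on $B(0,r_0)$, uniformly in $i$. Applying a finite-difference (Taylor) estimate to the one-variable restriction $t\mapsto g_i(tu)$, whose lower-order part is annihilated, yields $\sup_{B(0,r)}|g_i|\ge c\,\|i\|\,r^k\ge c\,\|i\|\,r^l$ for $r\le r_0$, using $r\le1$ and $k\le l$.

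\emph{Assembling.} The good property applied on $B(0,r)$ with $\e=a/\sup_{B(0,r)}|g_i|$ gives
$$\Vol(\{x\in B(0,r):|g_i(x)|\le a\})\le C\Big(\frac{a}{\sup_{B(0,r)}|g_i|}\Big)^{\frac{1}{dl}}\Vol(B(0,r)).$$
Inserting $\sup_{B(0,r)}|g_i|\ge c\|i\|r^l$ bounds the right-hand side by a constant times $(a/\|i\|)^{\frac1{dl}}r^{-1/d}\Vol(B(0,r))$, the factor $r^{-1/d}$ appearing exactly as $(r^l)^{-\frac1{dl}}$. Finally the hypothesis $a\le\|i\|$ gives $a^n\le\|i\|^{2n+1}$, equivalently $(a/\|i\|)^{\frac1{dl}}\le(a\|i\|^n)^{\frac{1}{dl(n+1)}}$, which yields the claimed estimate; the constant $A$ and the condition $(a\|i\|^n)^{1/(n+1)}\le Ar^l$ are used only to guarantee $\e\le1$, placing us in the non-trivial range of the good-function inequality.
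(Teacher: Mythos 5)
Your argument is correct in substance, but it follows a genuinely different and more elementary route than the paper's. The paper never works with the scalar functions $g_i(x)=(f(x),i)$ directly: it converts the condition $|(f(x),i)|\le a$ into the existence of a short vector in a lattice, via Schmidt's subgroup $[f(x)]\subset\RM^{n+1}$ and the diagonal flow $g_t$ with $t=\frac{1}{n+1}\log(\|i\|/a)$ (Lemma~\ref{L::arithmetique}, which is where the hypothesis $a\le\|i\|$ enters, ensuring $t\ge0$), and then bounds the measure of $\{x\in B(0,r):\dt(g_t[f(x)])\le\e\}$ by the Kleinbock--Margulis non-divergence theorem (Theorem~\ref{T::KM}), whose hypotheses are verified through Lemma~\ref{L::norm}, Corollary~\ref{C::KM}, and a Taylor-expansion lemma giving $\sup_{x\in B(0,r)}\|h_t(x)\G\|\ge Ar^l$. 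You observe, in effect, that this machinery is unnecessary for a statement about one frequency $i$ at a time: the same two analytic ingredients the paper applies to the lattice-norm functions $x\mapsto\|h_t(x)\G\|$ --- uniform $(C,1/dl)$-goodness of linear combinations (Corollary~\ref{C::KM}) and a Taylor-type lower bound on suprema over small balls --- can be applied to $g_i$ itself. Nothing is lost by this, since the paper itself only ever invokes the Proposition vector by vector (in the proof of Theorem~\ref{T::arithmetique} the sets $f^{-1}(M_i)$ are estimated separately and summed); and your route even yields the sharper bound $(a/\|i\|)^{1/dl}\,r^{-1/d}\,\Vol(B(0,r))$, from which the stated one follows by your reduction $(a/\|i\|)^{1/dl}\le(a\|i\|^n)^{1/(dl(n+1))}$, equivalent to $a^n\le\|i\|^{2n+1}$, valid since $a\le\|i\|$ and $\|i\|\ge1$. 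One small correction: the constraint $(a\|i\|^n)^{1/(n+1)}\le Ar^l$ is then not needed at all, not merely to ``guarantee $\e\le1$'', because the $(C,\tau)$-class inequality is vacuously true, for $C\ge1$, when the threshold exceeds $\|g_i\|_{B(0,r)}$.

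Two points need repair, neither fatal. First, your supremum bound conflates source and target: with $u=i/\|i\|\in\RM^n$, the expressions $\partial_u^kf(0)$ and $t\mapsto g_i(tu)$ are meaningless when $d\ne n$, since differentiation happens in $\RM^d$. What you need is: for every unit $u\in\RM^n$ there exist a direction $v\in\RM^d$ and an order $k\le l$ with $|(\partial_v^kf(0),u)|\ge c_0$, uniformly in $u$; this follows from curvature by compactness of the sphere together with polarization (mixed partials are linear combinations of pure directional derivatives), after which your one-variable finite-difference argument on $t\mapsto g_i(tv)$ goes through. More cleanly, one can rescale: writing $P_u(x)=\sum_{|I|\le l}(\partial^If(0)/I!,u)\,x^I$, norm equivalence on the finite-dimensional space of polynomials of degree $\le l$ gives $\sup_{B(0,r)}|P_u|\ge c\,c_0\,r^l$ for $r\le1$, and the $C^l$ Taylor remainder is $o(r^l)$ uniformly in $u$. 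Second, your proof assumes the derivatives $\{\partial^jf(0):|j|\le l\}$ span all of $\RM^n$, whereas the Proposition allows $f$ to be curved in a subspace $V$ with only $f(0)\ne0$. This restriction is real: if $V$ is proper and some $i\in\ZM^n\setminus\{0\}$ is orthogonal to both $V$ and $f(0)$, then $g_i\equiv0$ and the stated bound fails as $a\to0$. But the defect is inherited from the paper, whose own proof implicitly makes the same assumption (it identifies $h_0(x)\G$ with $[f(x)]$, which requires the lattice spanned by the chosen basis of $V$ to be $\ZM^n$), and whose application in Theorem~\ref{T::arithmetique} escapes the problem only because there $f(0)\in\Ct(a)$ forces $|(f(0),i)|$ to be bounded below for every integer $i$.
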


 For a subset $K \subset \RM^d$ and a function  
 $$f:K \to \RM$$
 we define
 $$  \| f \|_K:=\sup_{x \in K} | f(x) |$$
 (which might be infinite) and use the conventions $1/0=+\infty$, $1/+\infty=0$. In the sequel, we denote by $U \subset \RM^d$ an open neighbourhood of the origin.
   \begin{definition}[\cite{Kleinbock_Margulis}] A map $f:U \to \RM $
  is of $(C,\tau)$-class if for any open ball $B \subset U$ and any $\e>0$, the following estimate holds~:
 $$\Vol(\{x \in B: | f(x) | \leq \e \}) \leq C \left( \frac{\e}{\| f \|_B}\right)^\tau \Vol(B). $$
 \end{definition}
 Functions of class $(C,\tau)$ define a cone~: if $f$ is of $(C,\tau)$-class then so is $\l\, f $ for any $\l \in \RM$.
 
Lets us denote by $x_1,x_2,\dots,x_d$ the coordinates in $\RM^d$. We shall use multi-index notations
$$\d^\b:=\d_{x_1}^{\b_1} \d_{x_2}^{\b_2}  \dots \d_{x_d}^{\b_d}  $$
and put $| \b |=\b_1+\b_2+\dots+\b_d$.

A compact  $K \subset \RM^d$ will be called a {\em hypercube} if it is of the type
$$K:=[a_1,a_1+\dt] \times [a_n,a_n+\dt] \times \dots \times [a_d,a_d+\dt].$$
for some real numbers $a_1,a_2,\dots,a_d,\dt$ with $\dt$ positive. The volume of such a subset is $\dt^d$.

 \begin{lemma}[\cite{Kleinbock_Margulis}] \label{L::KM}  Let  $f:U \to \RM$ be a $C^l$ function. Assume that there exists $M,m>0$ such that for any multi-index $\b$ with $|\b| \leq l$, we have~:
  \begin{enumerate}[{\rm i)}]
 \item $ \inf_{x \in U} \| \d^l_{x_i} f(x)   \| >m$ for $i=1,\dots,d$ ;
\item  $   \sup_{x \in U} \| \d^\b f(x) \| < M $. 
\end{enumerate}
 For any hypercube $K$ contained in $U$, we have
 $$\Vol(\{x \in K: | f(x) | \leq \e \}) \leq C \left( \frac{\e}{\| f \|_K}\right)^{1/dl} \Vol(K) $$
with
$$C:=dl(l+1)\left( \frac{M}{m}(l+1)(2l^l+1)\right)^{1/l}.$$
 \end{lemma}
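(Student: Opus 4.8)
The plan is to prove the one-variable case first and then to recover the general statement by slicing the hypercube along lines parallel to the coordinate axes and inducting on the dimension $d$. The reduction is available precisely because the hypotheses are imposed on the pure coordinate derivatives $\d_{x_i}^l f$ in \emph{every} direction $i$: the restriction of $f$ to a segment parallel to the $x_i$-axis is a one-variable function $g$ whose $l$-th derivative equals $\d_{x_i}^l f$, so it inherits both the lower bound $|g^{(l)}|>m$ and the upper bounds $|g^{(k)}|<M$ for $k\le l$, with the same constants $m,M$.

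For the one-variable estimate on an interval $I$, I would argue as follows. Since $g^{(l)}$ is continuous and $|g^{(l)}|\ge m>0$, it has constant sign on $I$; iterating Rolle's theorem then shows that $g^{(k)}$ has at most $l-k$ zeros, so $g$ has at most $l$ monotonicity intervals and the sublevel set $\{\,t\in I:|g(t)|\le\e\,\}$ has at most $l$ connected components. To bound the length of a single component $J$ on which $|g|\le\e$, I would choose $l+1$ equally spaced nodes in $J$ and compare the two expressions for the $l$-th divided difference of $g$: on one hand it equals $g^{(l)}(\xi)/l!$, of absolute value $\ge m/l!$; on the other hand it is a weighted sum of the values $g(t_j)$, all bounded by $\e$. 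This forces $|J|\le\mathrm{const}\cdot(\e/m)^{1/l}$, and multiplying by the number of components gives a total bound of the form $\mathrm{const}\cdot(\e/m)^{1/l}$.

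The delicate point is that the target estimate is normalised by $\|f\|_K$, not by $m$, and the two are genuinely different: already for $l=1$ a function with $|g'|\ge m$ but $\|g'\|_I\le M$ can have a sublevel set of size $\sim\e/m$ while $\e|I|/\|g\|_I$ is only $\sim\e/M$, so the ratio $M/m$ is unavoidable. I would make the passage from the $m$-normalisation to the $\|f\|_I$-normalisation by comparing $g$ with its Taylor polynomial on $I$ and using the upper bounds $M$ to control the remainder, which converts $1/m$ into a multiple of $|I|^l/\|g\|_I$ at the cost of the factor $(M/m)^{1/l}$ visible in the constant. This settles the case $d=1$, with exponent $1/l$.

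Finally I would pass to arbitrary $d$ by induction. Writing $K=I\times K'$ and integrating over the fibres parallel to the $x_1$-axis, I would split $K'$ according to a threshold $\theta$ for the partial supremum $\phi(x')=\sup_{x_1}|f(x_1,x')|$: where $\phi\ge\theta$ the one-dimensional estimate gives a gain $(\e/\theta)^{1/l}$, while the region where $\phi<\theta$ has small $(d-1)$-dimensional measure by the inductive hypothesis applied to a fibre $f(x_1^0,\cdot)$ realising $\|f\|_K$. Optimising over $\theta$ balances the two contributions, and this is exactly what degrades the exponent from $1/l$ to $1/(dl)$ while producing the prefactor $dl(l+1)$. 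The main obstacle is this last step: one must control the sup-norm on the lower-dimensional fibres in terms of $\|f\|_K$ and carry the constant through the optimisation, and it is here that the exponent $1/(dl)$ — rather than the $1/l$ one gets naively from a single slicing in terms of $m$ — is forced.
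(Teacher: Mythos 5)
The paper does not actually prove this lemma: it is quoted from Kleinbock--Margulis, so your attempt can only be measured against their argument. Your overall architecture --- reduction to one variable (legitimate, since hypothesis (i) concerns the pure derivatives $\d_{x_i}^l f$, which restrict to coordinate lines), a one-variable estimate via Rolle's theorem plus divided differences, then induction on $d$ by slicing the hypercube and optimising a threshold $\theta$ --- is exactly the Kleinbock--Margulis strategy, and your induction step (balancing $(\e/\theta)^{1/l}$ against $(\theta/\|f\|_K)^{1/((d-1)l)}$, which indeed yields the exponent $1/(dl)$) is sound in structure.

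The gap is in the one-variable case, in the passage from the $m$-normalised bound to the $\|f\|_K$-normalised one. Your divided-difference argument correctly gives $|\{t \in I : |g(t)|\le \e\}| \le c_l\,(\e/m)^{1/l}$, but the claimed conversion --- a Taylor comparison trading $1/m$ for a multiple of $|I|^l/\|g\|_I$ at the cost of $(M/m)^{1/l}$ --- amounts to the inequality $\|g\|_I \le \mathrm{const}\cdot M\,|I|^l$, and this is false for $l\ge 2$: take $g(t)=t+t^2$ on $I=[0,\delta]$ with $\delta$ small, so that $g''=2$, $M$ is bounded, yet $\|g\|_I \approx \delta \gg M\delta^2$. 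In this example the sublevel set has measure $\approx \e$, your $m$-bound gives $\approx\sqrt{\e/2}$, while the target is $C\sqrt{\e\delta}$; since $\sqrt{\e/2}\le C\sqrt{\e\delta}$ forces $\delta \gtrsim 1/C^2$, the target genuinely cannot be deduced from the $m$-bound as $\delta\to 0$, no matter how the constants are arranged. What is missing is the second mechanism of the Kleinbock--Margulis proof: when $\|g\|_I$ dominates $M|I|^l$ (precisely the regime of the example), one chooses $l$ nodes \emph{inside the sublevel set} $E$, pairwise separated by $\gtrsim |E|/l$ (a measure-splitting selection), and applies Lagrange interpolation with remainder to get $\|g\|_I \le \e\, c_l\,(|I|/|E|)^{l-1} + M|I|^l/l!$, a bound on $|E|$ normalised directly by $\|g\|_I$ with no $m$ appearing. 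The full proof then splits into two cases according to whether $M|I|^l/l!$ exceeds $\|g\|_I/2$, using the interpolation-in-$E$ bound in one case and your $m$-bound in the other (this is where the ratio $M/m$ enters the constant). Without this second interpolation argument and the case distinction, the one-dimensional step --- and hence the whole induction built on it --- does not close.
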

 \begin{corollary} 
 Let $f:U \to \RM$  be a $C^l$ function. Assume that  the $l$-th order Taylor expansion of $f$ at the origin is not constant. Then
 there exist a neighbourhood of the origin and constants $C,\tau$ such that the restriction of $f$ to this neighbourhood  is of $(C,\tau)$-class.
\end{corollary}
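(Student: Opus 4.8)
The plan is to deduce the corollary from Lemma~\ref{L::KM}, applied not at the differentiability order $l$ but at an order $k \le l$ for which the Taylor polynomial of $f$ at the origin has a non-vanishing homogeneous part, and after a suitable linear change of coordinates. Since the $l$-th Taylor expansion is not constant, there is an integer $k$ with $1 \le k \le l$ whose degree-$k$ homogeneous part $P_k$ is a nonzero polynomial.

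The one non-routine ingredient is a generic linear change of coordinates $x = Ly$, with $L \in \GL(d,\RM)$. Writing $\tilde f(y) := f(Ly)$, the substitution preserves the homogeneous decomposition degree by degree, so the degree-$k$ part of the Taylor expansion of $\tilde f$ is $P_k(Ly)$; evaluating at $e_i$ shows that the coefficient of $y_i^k$ equals $P_k(\ell_i)$, where $\ell_i$ denotes the $i$-th column of $L$. Because $P_k \ne 0$, its zero locus is a proper algebraic subvariety of $\RM^d$, so for $L$ in a dense open subset of $\GL(d,\RM)$ every column $\ell_i$ avoids it. Fixing such an $L$, we obtain $\d_{y_i}^k \tilde f(0) = k!\,P_k(\ell_i) \ne 0$ for all $i = 1,\dots,d$ simultaneously. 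This is exactly what is needed to feed the multivariable Lemma~\ref{L::KM}, whose hypothesis~i) requires a pure $k$-th derivative in each coordinate direction to be bounded away from zero --- a condition the bare assumption (a single, possibly mixed, nonzero derivative) does not provide.

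By continuity of the derivatives of $\tilde f$, which is of class $C^k$, there is a bounded neighbourhood $U'$ of the origin and constants $m, M > 0$ with $\inf_{U'}|\d_{y_i}^k \tilde f| > m$ for each $i$ and $\sup_{U'}|\d^\b \tilde f| < M$ for all $|\b| \le k$. Applying Lemma~\ref{L::KM} with $l$ replaced by $k$ then shows that $\tilde f$ obeys the sublevel estimate with exponent $\tau = 1/(dk)$ on every hypercube contained in $U'$.

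It remains to turn this hypercube estimate for $\tilde f$ into the $(C,\tau)$-class property for $f$ over balls in the original coordinates. I would do this by three elementary comparisons, each arranged so that the supremum norm moves in the favourable direction and the exponent $\tau$ is unchanged: enclosing a ball in the smallest hypercube containing it (so that $\|\tilde f\|$ over the cube dominates $\|\tilde f\|$ over the ball and the volumes are comparable up to a dimensional constant); transporting back through $x = Ly$, where the ellipsoid $L^{-1}B$ is enclosed in a ball on which $\|\tilde f\|$ dominates $\|f\|_B$, with volume ratio controlled by $\|L^{-1}\|^d/|\det L^{-1}|$; and finally shrinking the neighbourhood so that all these enclosing balls remain inside $U'$. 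Each comparison only multiplies $C$ by a constant depending on $d$ and $L$, yielding the desired $(C,\tau)$-class estimate near the origin. The heart of the argument is the coordinate change of the second paragraph; everything after it is bookkeeping with sup-norms and volume ratios.
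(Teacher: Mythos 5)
Your proof is correct and takes essentially the same route as the paper: reduce to Lemma~\ref{L::KM} via a linear change of coordinates that makes every pure $k$-th partial derivative nonzero at the origin (your genericity argument on the zero locus of $P_k$ is the explicit version of what the paper dismisses with ``up to a rotation''), then pass from balls to circumscribed hypercubes. The only difference is that the paper uses a rotation instead of a generic element of $\GL(d,\RM)$, which spares it the ellipsoid and volume-ratio bookkeeping of your final paragraph, since a rotation carries balls to balls and preserves volumes and sup-norms.
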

\begin{proof}
 Indeed, up to a rotation, we may assume that the Taylor expansion of $f$ at the origin is of the type
 $$f(x)=\sum_{| I | = k} a_I x^I+o(| x|^k),\ k \leq l $$
 with $ \d^k_i  f(0) \neq 0 $ for all $i=1,\dots,d$.

 Choose $r$ sufficiently small so that there exists $m,M$ with
 $$  \| \d^k_i  f(x) \|  \geq m,\ \forall x \in B(0,r), \forall i=1,\dots,d.$$
Now, any ball with radius $\rho$ lying inside $B(0,r/\sqrt{d})$ is contained in a circunscribed hypercube whose sides have length $2\rho$, itself contained inside the ball $B(0,r)$. Thus, the estimate of the previous lemma gives constants $C,\tau$ for which the restriction of $f$ to $B(0,r/\sqrt{d})$ is of $(C,\tau)$ class.
\end{proof}
 \begin{corollary}\cite[Corollary 3.2]{Kleinbock} \cite [Proposition 3.4]{Kleinbock_Margulis}
  \label{C::KM} For any  curved mapping  
  $$f=(f_1,\dots,f_n):U \to \RM^n$$ and any point $x \in U$, there exist a neighbourhood $U'$ of $x$ and constants $C,\tau$ such that for any $c=(c_0,c_1,\dots,c_n) \in \RM^{n+1} $  the restriction of the function $c_0+c_1f_1+c_2f_2+\dots+c_nf_n$ to $U'$ is of $(C,\tau)$-class.
\end{corollary}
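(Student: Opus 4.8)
The plan is to translate the base point to the origin and to exploit the cone property noted just above the statement: since $c_0+c_1f_1+\dots+c_nf_n$ and its scalar multiples are $(C,\tau)$-class simultaneously, it suffices to prove the estimate uniformly for $c=(c_0,\dots,c_n)$ on the unit sphere of $\RM^{n+1}$, which is compact. Writing $\bar c=(c_1,\dots,c_n)$ and decomposing it along $V$ and its orthogonal complement, the relation $f(U)=f(0)+V$ shows that the component of $\bar c$ orthogonal to $V$ pairs with $f$ to a constant; hence $g_c:=c_0+c_1f_1+\dots+c_nf_n$ splits as a constant $\g_c$ plus a function $\p_c$ with $\p_c(0)=0$ and $\d^\b\p_c=(\bar c,\d^\b f)$ for $|\b|\geq 1$.

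First I would record two uniform bounds on a fixed small ball $B_0$ around the origin. Since $f$ is $C^l$, all derivatives $\d^\b g_c$ with $|\b|\leq l$ are bounded above by a constant $M$ for $c$ on the unit sphere. In the other direction, the curvature hypothesis says that $\{\d^\b f(0):1\leq|\b|\leq l\}$ generates $V$; therefore the continuous function $(\bar c,x)\mapsto\max_{1\leq|\b|\leq l}|(\bar c,\d^\b f(x))|$ is bounded below by some $2\rho>0$ on the compact unit sphere of $V$ at $x=0$, and by uniform continuity it stays above $\rho$ for all $x\in B_0$. By homogeneity this yields the key lower bound
$$\max_{1\leq|\b|\leq l}|\d^\b g_c(x)|\geq\rho\,\|\bar c_V\|,\qquad x\in B_0,$$
where $\bar c_V$ denotes the orthogonal projection of $\bar c$ onto $V$.

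Then I would convert these bounds into the measure estimate by treating two regimes according to the ratio $\|\bar c_V\|/\|g_c\|_{B_0}$. When this ratio is small, $g_c$ is dominated by its constant part $\g_c$ and its oscillation is negligible compared with its size, so $\{x:|g_c(x)|\leq\e\}$ is empty until $\e$ becomes comparable to $\|g_c\|_{B_0}$, and the $(C,\tau)$-estimate holds trivially with a universal $C$. When the ratio is not small, the displayed lower bound makes $g_c$ non-degenerate relative to its size, and I would run the mechanism of the preceding corollary: at each point of $B_0$ some derivative $\d^\b g_c$ of order $k=|\b|\leq l$ is bounded below, after a rotation the corresponding pure $k$-th partials are bounded below, and Lemma~\ref{L::KM} applies on inscribed hypercubes, giving the estimate with the uniform exponent $1/dl$ (the larger exponents $1/dk\geq 1/dl$ coming from lower-order leading terms reduce to $1/dl$ on the bounded domain). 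Taking the maximum of the finitely many constants furnished by compactness produces uniform $C,\tau$.

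The main obstacle is exactly this last uniformity: a priori the rotation, the order $k$, and the lower bound all depend on $c$ and degenerate as $g_c$ tends to a constant. I expect to overcome it by compactness, since the unit sphere of coefficients, the rotation group $O(d)$, and the finite set of orders $k\in\{1,\dots,l\}$ are compact while every derivative depends linearly, hence continuously, on $c$; thus the relevant infima of the pure-partial lower bounds are attained and strictly positive. The most delicate point is the stratum where $g_c$ degenerates to the zero function, where neither regime is forced by an absolute size; there the cone property must be combined with the amplitude-versus-mean dichotomy above so that the constants do not blow up.
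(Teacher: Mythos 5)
Your proof is correct, and its skeleton is the same as the paper's: use the cone property to normalise the coefficient vector, decompose $\bar c$ along $V$ and $V^\perp$ (so that the $V^\perp$-part of $\bar c$ pairs with $f$ into the constant term, since $f(U)\subset f(0)+V$), and then rerun the mechanism of the preceding corollary --- rotation, lower bounds on pure partials, Lemma~\ref{L::KM} on hypercubes circumscribed about balls --- with compactness of the coefficient sphere supplying uniform constants. Where you genuinely go beyond the paper is the two-regime dichotomy in the ratio $\|\bar c_V\|/\|g_c\|_{B_0}$, and this is not a redundant precaution: the paper's own proof simply asserts that ``repeating the proof of the previous corollary, by a uniform estimate for the derivatives'' yields $C,\tau$ independent of $c$, which glosses over exactly the stratum you isolate. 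Indeed, with $\|c\|=1$ and $\bar c_V$ small but nonzero, every derivative bound of order $\geq 1$ scales like $\|\bar c_V\|$ while $\|g_c\|_B$ stays of order $|c_0|$; since the constant in Lemma~\ref{L::KM} involves the ratio $M/m$ with $M$ bounding $\d^\b g_c$ for \emph{all} $|\b|\leq l$ including $\b=0$, it blows up as $\bar c_V\to 0$, and no uniform derivative estimate by itself can prevent this. Your amplitude-versus-mean dichotomy (trivial estimate when the constant part dominates, because the sublevel set is empty until $\e$ is comparable to $\|g_c\|_B$; Lemma~\ref{L::KM} with $m$, $M$ and $\|g_c\|_B$ all comparable to $\|\bar c_V\|$ when it does not) is precisely the missing step, and it is also how Kleinbock and Margulis argue in the original Proposition 3.4. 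So your proposal is not merely correct: it supplies, in the right way, a detail that the paper's proof leaves implicit.
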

\begin{proof} Let $e_1,\dots,e_r \in \RM^n$ be a basis of a vector subspace $V$ so that $f$ is non-degenerate in $V$. Write
$$f=f(x)+\sum_{i=1}^r g_i e_i .$$
As $f$ is non-degenerate,  for any $c=(c_0,c_1,\dots,c_n) \in \RM^{n+1} $,  the function 
$$c_0+c_1f_1+c_2f_2+\dots+c_nf_n= c_0+(c_1,e_1) g_1+(c_2,e_2) g_2+\dots+(c_n,e_n)g_n$$ has a non-zero Taylor expansion at the origin,
provided that not all $(c_i,e_i)'s$ vanish for $i>0$. 
As the functions of $(C,\tau)$-class define a cone, it is sufficient to consider the case $\|c \|=1$. Then, repeating the proof of the previous corollary, by a uniform estimate for the derivatives, we find $C,\tau$ independent on the choice on $c$ such that all linear combinations of $1,f_1,\dots,f_n$ are of $(C,\tau)$-class.
\end{proof} 
 \section{The Kleinbock-Margulis theorem}
Denote by $e_1,e_2,\dots,e_{n+1}$ the standard basis of the vector space $\RM^{n+1}$.   
For $i=(i_1,i_2,\dots,i_k)$, $i_j < i_{j+1}$, we put
$$e_i:=e_{i_1} \w e_{i_2} \w \dots \w e_{i_k}$$
and endow the exterior algebra   $\L^\bullet \RM^n$ of a scalar product as follows.
First define the {\em Hodge operator} 
$$*:\L^p \RM^n \to \L^{n-p} \RM^n$$ by the condition
$$* u \w u=e_1 \w e_2 \w \dots \w e_n$$
and the scalar product in $\L^\bullet \RM^n$ by
$$ * u \w v=(u,v)  e_1 \w e_2 \w \dots \w e_n. $$
This endows the exterior algebra of an euclidean structure for which the $e_i$'s define an orthonormal basis. 
 
 The map 
 $$\L^\bullet \RM^n \to \L^\bullet \RM^n,\ v \mapsto -v $$
defines an action of the group  $\ZM/2\ZM$ on the exterior algebra $\L^\bullet \RM^n$. There is a well-defined injective map 
$$\G \mapsto \overline{u_1 \w u_2 \w \cdots \w u_r} $$
which sends a discrete subgroup $\G$ generated  by $u_1,u_2,\dots,u_r$ to the class $u_1 \w u_2 \w \cdots \w u_r$ in the quotient space
$\L^\bullet \RM^n/(\ZM/2\ZM) $.

We define the "{\em norm}" of the discrete subgroup $\G \subset \RM^n$ by~:
 $$\| \G \|:=\| u_1 \w u_2 \w \cdots \w u_r  \| $$

 A discrete subgroup is called   {\em primitive} if it is not a proper subgroup of a discrete subgroup with the same rank.
We denote by $\Lt^r$ the primitive subgroups of $\ZM^r$ and by  $L_0(\RM^r, \RM^{n+1})$ the vector space of rank $r$ linear mappings from $\RM^r$ to $\RM^{n+1}$. 
  \begin{theorem}
 \label{T::KM} Let $h:\RM^d \supset B(0,3R) \to L_0(\RM^r,\RM^{n+1})$ be such that for any $\G \in \Lt^r$  the mapping
 $$\psi_\G:B(0,3^rR) \to \RM,\ x \mapsto \| h(x)\G \|$$ is of  $(C,\tau)$ class. Assume that there exists $\rho \leq 1$ such that the inequality
   $$\| \psi_\G \|_{B(0,R)} \geq \rho$$
   holds for any $\G \in \Lt^r$.
 There exists a  constant $C'$ which depends only on $C,d,r$ such that
$$\Vol(\{ x \in B(0,R): \dt(h(x)\ZM^r) \leq \e \}) \leq C' \left( \frac{\e}{\rho}\right)^\tau R^d $$
for any $\e \leq \rho$.
\end{theorem}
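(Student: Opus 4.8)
The plan is to prove the estimate by induction on the rank $r$, feeding the $(C,\tau)$-good property of the functions $\psi_\G$ into a Besicovitch covering of $B(0,R)$ and using the behaviour of the norm $\| \cdot \|$ under sum and intersection of subgroups to keep the covering finite. First I would record the elementary reduction to rank-one subgroups: since $h(x)\ZM^r$ is a rank-$r$ lattice in $\RM^{n+1}$, it contains a nonzero vector of length $\le \e$ exactly when some primitive subgroup $\G \in \Lt^r$ of rank one satisfies $\psi_\G(x)=\| h(x)\G \| \le \e$, because the shortest vector is always attained on a primitive generator. Hence
$$\{ x \in B(0,R): \dt(h(x)\ZM^r) \le \e \}=\bigcup_{\G}\{ x \in B(0,R): \psi_\G(x) \le \e \},$$
the union running over rank-one primitive subgroups. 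Each set on the right is controlled by the $(C,\tau)$-good hypothesis, so the whole difficulty is that the union is infinite and a naive summation diverges.

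The organising principle comes from the exterior-algebra structure introduced above. For any two discrete subgroups one has the Minkowski-type inequality
$$\| \G_1 + \G_2 \|\cdot\| \G_1 \cap \G_2 \| \le \| \G_1 \|\cdot\| \G_2 \|,$$
obtained by comparing the decomposable vectors $u_1 \w \cdots \w u_p$ representing the subgroups in $\L^\bullet \RM^{n+1}$. I would use this to bound the simultaneous smallness of several subgroups: it forces the primitive subgroups that are active (of small norm) near a given point to be essentially nested, so that at each rank only a number of them bounded in terms of $r$ is relevant. This is what converts the infinite union into a finite, rank-indexed bookkeeping.

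I would then run the covering and the induction together. One covers $B(0,R)$ by a Besicovitch family of balls whose overlap is bounded in terms of $d$ alone. On a ball $B'$ of the cover on which a single primitive subgroup $\G$ of top active rank dominates, the $(C,\tau)$-good estimate combined with the lower bound on $\| \psi_\G \|$ produces a contribution of the shape $C(\e/\rho)^\tau \Vol(B')$. Where the shortest vector is instead governed by a subgroup whose supremum over $B'$ has fallen below $\rho$, one enlarges $B'$ by a factor $3$ — on the enlarged ball the supremum is recovered, via the good property — and descends to the sublattice picture of strictly smaller rank, invoking the inductive hypothesis for rank $r-1$. Iterating the enlargement $r$ times is exactly what demands that the $\psi_\G$ be $(C,\tau)$-good on the ball $B(0,3^rR)$. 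Summing the contributions, the Besicovitch constant (depending only on $d$) and the $r$ recursion steps combine into a single $C'=C'(C,d,r)$, while the homogeneous factor $(\e/\rho)^\tau$ is preserved at each step.

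The main obstacle is precisely this combinatorial step: one must show that the number of primitive subgroups that can simultaneously be small and dominant on a given ball is bounded in terms of $r$ only, with no hidden dependence on $\e$ or $\rho$, and it is here that the nesting consequence of the Minkowski inequality is indispensable. A secondary delicate point is the bookkeeping of radii through the recursion, so that the total enlargement never exceeds the factor $3^r$ on which the hypotheses are available and so that both the exponent $\tau$ and the normalisation by $\rho$ survive each descent unchanged.
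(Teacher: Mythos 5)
You should first know that the paper contains no proof of Theorem~\ref{T::KM} to compare against: the statement is imported from the literature, the case $r=n+1$ being credited to Kleinbock--Margulis and the general case to Kleinbock, with only the remark that ``the proof is essentially the same''. Measured against those cited proofs, your outline does reproduce the correct architecture: the reduction of $\dt(h(x)\ZM^r)\leq\e$ to smallness of $\psi_\G$ for rank-one primitive $\G$, the Minkowski-type inequality $\|\G_1+\G_2\|\cdot\|\G_1\cap\G_2\|\leq\|\G_1\|\cdot\|\G_2\|$, an induction on rank run through a Besicovitch covering, and the tripling of balls, which is exactly what the hypothesis on $B(0,3^rR)$ pays for.

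However, the step you yourself single out as the heart of the argument is false as stated, and this is a genuine gap. It is not true that the primitive subgroups which are small at a point are ``essentially nested'', nor that the number of small and dominant subgroups on a ball is bounded in terms of $r$ alone: a lattice can contain many pairwise incomparable small primitive subgroups. For instance, if $h(x)$ is $\e$ times an isometric embedding, then every line $\ZM w$ with $w$ primitive and $\|w\|<\rho/\e$ satisfies $\psi_{\ZM w}(x)<\rho$, and these lines are pairwise incomparable; the hypothesis $\|\psi_\G\|_{B(0,R)}\geq\rho$ constrains suprema over the ball, not values at a point, so such points are precisely the ones the covering argument must handle. What the Minkowski inequality actually yields is only an alternative: if $\psi_{\G_1}(x)<\rho$ and $\psi_{\G_2}(x)<\rho$ with $\rho\leq 1$, then at least one of $\G_1\cap\G_2$, $\G_1+\G_2$ (with the convention $\|\{0\}\|=1$) is also small at $x$. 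Kleinbock and Margulis exploit this not through any cardinality bound but through an induction on the length of chains (flags) of primitive subgroups, organized in an abstract ``marked points'' covering theorem: at a bad point one selects a subgroup maximal among the small ones, grows a ball around that point until the supremum of the corresponding $\psi_\G$ reaches $\rho$, applies the $(C,\tau)$-good estimate on the enlarged ball, and recurses on what remains, the rank bounding the depth of the recursion and each step contributing a factor to $C'$. Your plan would need its combinatorial core replaced by that mechanism; as written, the finiteness claim on which your summation relies does not hold, so the infinite union cannot be closed.
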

The value of $C'$ is given in the paper of Kleinbock and Margulis who proved the theorem for the case $r=n+1$~\cite[Theorem 5.2]{Kleinbock_Margulis}.
Theorem~\ref{T::KM} is given by Kleinbock and the proof is essentially the same as for $r=n+1$~\cite[Theorem 2.6]{Kleinbock_Baker}. There also exists a more general statement due to Bernick, Kleinbock and Margulis~\cite[ Theorem 6.2]{Bernick_Kleinbock_Margulis}. 
\section{Discrete subgroups and arithmetic classes}
 We now adapt the Kleinbock-Margulis method to our setting~\cite{Kleinbock_Margulis}.
To the vector $\a \in \RM^n$, we associate, with Schmidt, the discrete subgroup $[\a]$  in $\RM^{n+1}$ of rank $n$ defined by
$$[\a]:=\{(i,(\a,i)) \in \RM^{n+1}: i\in \ZM^n \} $$
where $(\cdot,\cdot)$ denotes the euclidean scalar product~\cite{Schmidt}. 
Consider the linear map
$$g_t:\RM^{n+1} \to \RM^{n+1} $$ 
whose matrix in the standard basis is diagonal with coefficients~:
$$(e^{-t},e^{-t},\dots,e^{-t},e^{nt}). $$

Given a discrete subgroup $\G \subset \RM^{n+1}$, we use the notation
$$\dt(\G):=\inf_{\g \in \G \setminus \{ 0 \}} \| \g \| $$
where $\| \cdot \|$ denotes the euclidean norm. 

\begin{lemma}
\label{L::arithmetique} Let $i \in \ZM^n$ be such that $|(\a,i)| \leq a $, $a\neq 0$, then
$$\dt(g_t [\a]) \leq \e $$ where $\e,t$ are defined by
$$\left\{ \begin{matrix}\e&=&\sqrt{2} \left(a\| i \|^n\right)^{\frac{1}{n+1}} ; \\ t &=&\frac{1}{n+1}\log \frac{\| i \|}{a}   \end{matrix} \right. $$  
\end{lemma}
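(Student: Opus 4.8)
The plan is to produce one explicit short nonzero vector in the transported lattice $g_t[\a]$ and to read off the bound on $\dt$ directly from its length. The natural candidate is the image under $g_t$ of the lattice point attached to the given index $i$. First I would set $v:=(i,(\a,i))$, which belongs to $[\a]$ and is nonzero since $i\neq 0$ (the degenerate case $i=0$ is excluded, as there $t$ is ill-defined through $\log(\|i\|/a)$ and $\e=0$). By definition of $g_t$ its image is
$$g_t v=(e^{-t}i,\,e^{nt}(\a,i)),$$
so that
$$\|g_t v\|^2=e^{-2t}\|i\|^2+e^{2nt}(\a,i)^2.$$

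Next I would substitute the prescribed value $t=\frac{1}{n+1}\log\frac{\|i\|}{a}$, which gives $e^{t}=(\|i\|/a)^{1/(n+1)}$, and estimate the two summands separately. The first becomes an exact equality, $e^{-2t}\|i\|^2=(a\|i\|^n)^{2/(n+1)}$, after collecting the exponents via $2-\tfrac{2}{n+1}=\tfrac{2n}{n+1}$. For the second I would invoke the hypothesis $|(\a,i)|\le a$ to replace $(\a,i)^2$ by $a^2$, obtaining $e^{2nt}(\a,i)^2\le(a\|i\|^n)^{2/(n+1)}$ after the symmetric computation $2-\tfrac{2n}{n+1}=\tfrac{2}{n+1}$.

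Adding the two estimates yields $\|g_t v\|^2\le 2\,(a\|i\|^n)^{2/(n+1)}=\e^2$, whence $\dt(g_t[\a])\le\|g_t v\|\le\e$. The argument is thus a direct computation: there is no genuine conceptual obstacle, and the only point requiring care is the exponent bookkeeping. Conceptually, the value of $t$ is chosen precisely to balance the two contributions to $\|g_t v\|^2$ so that they share the same order of magnitude; this is the quantitative form of the Dani-type correspondence translating a small value $|(\a,i)|$ of a linear form into a short vector of the flowed lattice, and it is this correspondence that will let the Kleinbock--Margulis estimate of Theorem~\ref{T::KM} be brought to bear.
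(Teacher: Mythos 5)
Your proof is correct and follows essentially the same route as the paper: both exhibit the explicit nonzero lattice vector $g_t(i,(\a,i))$ and bound its norm, the paper via the inequality $\|(x,y)\|\le\sqrt{2}\max(\|x\|,|y|)$ and you via summing the two squared terms, which is the same computation. Your added remark that $i\neq 0$ (so the vector is genuinely nonzero and $\dt$ applies) is a small point of care the paper leaves implicit.
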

\begin{proof}
For any $x \in \RM^n$ and any $y \in \RM$, we have~:
$$\| (x,y) \| \leq \sqrt{2} \max\left( \| x \| ,| y | \right) .$$
Consequently, the estimates $|(\a,i)| \leq a $ gives~:
$$\| g_t (i,(\a,i) \| \leq  \sqrt{2} \max\left(e^{-t} \| i \| , e^{nt} a \right)=\e $$
\end{proof} 

\section{Proof of Proposition~\ref{P::KM}}
Consider the vector subspace $V \subset \RM^n$  inside which the map 
$$f:U \to \RM^n,\ f(U) \subset f(0)+V$$
is non-degenerate and let us denote by 
$e_1,e_2,\dots,e_k$ a basis of the vector space $V$. We embed $\RM^n$ in $\RM^{n+1}$ using the map
$$\RM^n \to \RM^{n+1},\ x \mapsto (x,0) $$ and put $e_{n+1}=(0,\dots,0,1)$.

Denote by $L(-,-)$ the space of linear mappings and consider the $t$-dependent mappings
 $$h_t:V \to L(V,\RM^{n+1})$$
 defined by
$$h_t(x):V \to \RM^{n+1},\
e_i \mapsto   e^{-t}e_i+e^{nt} (e_i,f(x))e_{n+1}  $$
 In particular, if $\G$ is the subgroup generated by $e_1,\ e_2,\dots,\ e_k$, then $h_0(x) \G$ is the discrete group associated to $f(x)$~:
 $$h_0(x)\G=[f(x)] .$$
 We denote by 
 $$\pi_\G:V \to V  $$
 the orthogonal projection on the vector space $\G \otimes \RM$.
 
\begin{lemma}
\label{L::norm}
For any discrete subgroup $\G \subset \RM^n$ of rank $r$, we have 
$$\| h_t(x)\G \|=   \sqrt{e^{-2rt}+ e^{2(n-r+1)t}\| \pi_\G(f(x))\|^2}\, \| \G \| .$$
\end{lemma}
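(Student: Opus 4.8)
The plan is to compute $\| h_t(x)\G \|$ straight from its definition, namely as the norm $\| h_t(x)u_1 \w \cdots \w h_t(x)u_r \|$ of the wedge of the images of any family $u_1,\dots,u_r$ generating $\G$, and then to split this wedge into two orthogonal pieces corresponding to the two summands in the asserted formula. First I would expand, by linearity of $h_t(x)$, each factor as $h_t(x)u_j = e^{-t}u_j + e^{nt}(u_j,f(x))\,e_{n+1}$ and substitute into the product. Since $e_{n+1}\w e_{n+1}=0$, at most one factor $e_{n+1}$ survives, so the product decomposes as $A+B$ with
$$A=e^{-rt}\,u_1\w\cdots\w u_r\in\L^r\RM^n,$$
$$B=e^{nt}e^{-(r-1)t}\sum_{j=1}^r(-1)^{r-j}(u_j,f(x))\,\bigl(u_1\w\cdots\widehat{u_j}\cdots\w u_r\bigr)\w e_{n+1}\in\L^{r-1}\RM^n\w e_{n+1}.$$

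Because $\RM^{n+1}=\RM^n\oplus\RM e_{n+1}$ is an orthogonal decomposition, the orthonormal monomials $e_I$ containing the index $n+1$ are orthogonal to those that do not; hence $A\perp B$ and $\|h_t(x)\G\|^2=\|A\|^2+\|B\|^2$. The term $A$ is immediate, $\|A\|^2=e^{-2rt}\|u_1\w\cdots\w u_r\|^2=e^{-2rt}\|\G\|^2$, reproducing the first summand. For $B$, I would pull out the factor $\w e_{n+1}$, which is an isometry of $\L^{r-1}\RM^n$ onto $\L^{r-1}\RM^n\w e_{n+1}$ (wedging with a unit vector orthogonal to $\RM^n$), and use $e^{nt}e^{-(r-1)t}=e^{(n-r+1)t}$. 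This reduces the computation to the norm of $\sum_j(-1)^{r-j}(u_j,f(x))\,u_1\w\cdots\widehat{u_j}\cdots\w u_r$, which, up to the overall factor $(-1)^{r+1}$ relating it to the standard contraction sign $(-1)^{j-1}$, is the interior product $\iota_{f(x)}\omega$ obtained by contracting $\omega:=u_1\w\cdots\w u_r$ against $f(x)$ via the scalar product.

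The heart of the argument — and the step I expect to cost the most care — is therefore the metric identity
$$\|\iota_w\omega\|=\|\pi_\G(w)\|\,\|\omega\|,\qquad w=f(x).$$
I would establish it by choosing an orthonormal basis $v_1,\dots,v_r$ of $\G\otimes\RM$, so that $\omega=D\,v_1\w\cdots\w v_r$ with $|D|=\|\omega\|=\|\G\|$. Contraction annihilates the component of $w$ orthogonal to $\G\otimes\RM$, since $(w,u_j)=(\pi_\G(w),u_j)$ for each $j$; thus $\iota_w\omega=\iota_{\pi_\G(w)}\omega$. Writing $\pi_\G(w)=\sum_k a_k v_k$ gives $\iota_{\pi_\G(w)}(v_1\w\cdots\w v_r)=\sum_k(-1)^{k-1}a_k\,v_1\w\cdots\widehat{v_k}\cdots\w v_r$, a combination of pairwise orthogonal unit monomials, whose norm is $\bigl(\sum_k a_k^2\bigr)^{1/2}=\|\pi_\G(w)\|$. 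Multiplying by $|D|$ yields $\|\iota_w\omega\|=\|\pi_\G(w)\|\,\|\G\|$.

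Substituting back, $\|B\|^2=e^{2(n-r+1)t}\,\|\pi_\G(f(x))\|^2\,\|\G\|^2$, and combining with $\|A\|^2$ produces
$$\|h_t(x)\G\|^2=\Bigl(e^{-2rt}+e^{2(n-r+1)t}\,\|\pi_\G(f(x))\|^2\Bigr)\|\G\|^2,$$
whence the claimed formula after taking the square root. The only subtle points are the orthogonality of the two exterior-algebra pieces and the contraction-norm identity; the sign bookkeeping is harmless, as every discrepancy is an overall factor that disappears under $\|\cdot\|$, and the independence of the result from the chosen generators follows because both $\|\G\|$ and $\pi_\G$ depend only on $\G$ and not on the family $u_1,\dots,u_r$.
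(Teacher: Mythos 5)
Your proof is correct and takes essentially the same route as the paper: the same splitting of $h_t(x)u_1\w\cdots\w h_t(x)u_r$ into the $e_{n+1}$-free part and the part wedged with $e_{n+1}$, orthogonality of the two pieces, and reduction of the contraction norm to a computation in an orthonormal basis of $\G\otimes\RM$. The paper's key assertion (its $A(x)=B(x)$, proved by wedging against each $u_i$) is exactly the basis-independence of the interior product $\iota_{f(x)}$ that you invoke, so the two arguments coincide up to packaging.
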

\begin{proof}
Choose a basis $u_ i=1,\dots,r$ of $\G$. The norm of $h_t(x)\G$ is the norm of the vector
$$e^{-rt} u_1 \w \dots \w u_r+e^{(n-r+1)t}\sum_{i=1}^r (-1)^i (u_i,f(x)) u_1\w \dots \w u_{i-1} \w \hat u_i \w u_{i+1} \dots \w u_r \w e_{n+1}. $$

Choose orthonormal vectors $b_1,\dots,b_r$ of $\RM^{n+1}$ which span the same $r$-dimensional vector space as $u_1,\dots,u_r$
and which define the same orientation as $u_1,\dots,u_r$, that is~:
$$u_1 \w u_2 \w \dots \w u_r= \| \G \|  b_1\w b_2 \w  \dots \w b_r $$
I assert that for any vector $\a \in \RM^n$, the polyvectors
$$A(x):=\sum_{i=1}^r (-1)^i (u_i,\a) u_1\w \dots \w u_{i-1} \w \hat u_i \w u_{i+1} \dots \w u_r $$
and
$$B(x):=   \| \G \|  \sum_{i =1}^r (-1)^i(b_i,\a) b_1\w \dots \w b_{i-1} \w \hat b_i \w b_{i+1} \dots \w b_r$$
are equal. Indeed for any $i=1,\dots,r$, we have
$$u_i \w A(x) =(u_i,\a)   \| \G \|\, b_1\w  \dots \w b_r=u_i \w B(x) .$$
This proves the assertion. Let us now apply this assertion to the vector
$$\a=e^{(n-r+1)t} f(x) . $$ As the vectors $b_1,\dots,b_r$ are orthonormal, we get
$$\| A(x) \|=\| \G \|\, \| \pi_\G(e^{(n-r+1)t} f(x)\|. $$
This concludes the proof of the lemma.
\end{proof}
Combining the lemma with the estimate
$$ \sqrt{x_1^2+\dots+x_n^2} \geq \frac{1}{\sqrt{n}} \left(|x_1|+|x_2|+\dots+|x_n|\right)$$
we deduce the existence, for any discrete subgroup~$\G$, of constants $c_1,\dots,c_n \in \RM_+$ such that ~:
$$ \| h_t(x)\G \| \geq c_0+c_1 |f_1|+\dots+c_n |f_n| $$
Thus by Corollary \ref{C::KM}, there exist $C,\tau$ such that,  for any discrete subgroup~$\G$, the restriction of the  function 
$$x \mapsto \frac{1}{\| \G \|}\| h_t(x)\G \|=\sqrt{ e^{-2rt}+ \|\pi_\G f(x)\|^2}  $$
to an appropriate neighbourhood of the origin is of class $(C,\tau)$.
As the $(C,\tau)$-class functions define a cone, the maps
$$x \mapsto \| h_t(x)\G \| $$
are also of class $(C,\tau)$.   

To apply the  Kleinbock-Margulis theorem, we need to minorate the norms of the lattices $h_t(x)\G$.
\begin{lemma} There exists constants $A,r_0$ such that for any lattice $\G \subset \RM^n$ and for any $r<r_0$, we have
$$\sup_{x \in B(0,r)}\| h_t(x)\G \| \geq Ar^l  . $$
\end{lemma}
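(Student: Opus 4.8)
The plan is to extract $\sup_{x\in B(0,r)}\|h_t(x)\G\|$ directly from Lemma~\ref{L::norm} and then reduce everything to a single scalar estimate. Write $p$ for the rank of $\G$ and put $S:=\sup_{x\in B(0,r)}\|\pi_\G f(x)\|$. Since the right-hand side of Lemma~\ref{L::norm} depends on $x$ only through $\|\pi_\G f(x)\|$ and is increasing in it, taking the supremum over the ball gives
$$\sup_{x\in B(0,r)}\|h_t(x)\G\|=\|\G\|\sqrt{e^{-2pt}+e^{2(n-p+1)t}S^2}.$$
As $\G$ is (the image of) a primitive integer sublattice, the wedge $u_1\w\cdots\w u_p$ has integer coordinates and is nonzero, so $\|\G\|\geq1$; it is therefore enough to minorise the square root uniformly in $t\geq0$.

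First I would remove the parameter $t$. For $1\leq p\leq n$ the two exponents $-2p$ and $2(n-p+1)$ have opposite signs, so the weighted arithmetic-geometric mean inequality with weights $\frac{n-p+1}{n+1}$ and $\frac{p}{n+1}$ makes the $t$-dependence cancel and yields, for every $t\geq0$,
$$e^{-2pt}+e^{2(n-p+1)t}S^2\geq c_1\,S^{2p/(n+1)},$$
with $c_1>0$ depending only on $n$. The whole lemma is thereby reduced to the uniform lower bound $S\geq c_2\,r^l$, with $c_2$ independent of $\G$ and $r$: combining the two displays gives $\sup_x\|h_t(x)\G\|\geq\sqrt{c_1}\,(c_2r^l)^{p/(n+1)}$, and since $0<r<r_0\leq1$ and $p/(n+1)\leq1$ the exponent $lp/(n+1)$ is at most $l$, so the right-hand side is $\geq A\,r^l$ for a suitable $A=A(c_1,c_2,n)$.

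The hard part is precisely this bound $S\geq c_2\,r^l$, uniform over all lattices $\G$. I would first note $\|\pi_\G f(x)\|\geq|(w,f(x))|$ for any unit vector $w\in\G\otimes\RM\subset V$, which reduces the claim to
$$\inf_{w\in V,\ \|w\|=1}\ \sup_{x\in B(0,r)}|(w,f(x))|\geq c_2\,r^l.$$
By $l$-curvature the derivatives $\d^j f(0)$ with $1\leq|j|\leq l$ span $V$, so the linear map $w\mapsto\big((w,\d^j f(0))\big)_{1\leq|j|\leq l}$ is injective on $V$; compactness of the unit sphere of $V$ then furnishes $\mu>0$ with $\max_{1\leq|j|\leq l}|(w,\d^j f(0))|\geq\mu$ for every unit $w$. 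Fixing such a $w$ together with a multi-index $j$, $m:=|j|\leq l$, realising the maximum, I set $\phi_w(x):=(w,f(x))$ and invoke the reverse-Markov estimate $\sup_{B(0,r)}|P|\geq c\,r^{m}|\d^jP(0)|$ for polynomials $P$ of degree $\leq l$ --- itself just the domination of the seminorm $P\mapsto|\d^jP(0)|$ by $P\mapsto\sup_{B(0,1)}|P|$ on a finite-dimensional space, rescaled by $x\mapsto rx$. Applied to the order-$l$ Taylor polynomial of $\phi_w$ at $0$ and combined with the Taylor remainder, controlled by the modulus of continuity $\omega$ of the order-$l$ derivatives of $f$ (uniform in $w$ because $\|w\|=1$), this gives $\sup_{B(0,r)}|\phi_w|\geq c\mu\,r^{m}-\omega(r)r^l\geq\tfrac12 c\mu\,r^l$ once $r_0$ is small enough that $\omega(r)\leq\tfrac12 c\mu$, using $r^{m}\geq r^l$. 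The delicate points are the genuine uniformity of both $\mu$ and the remainder bound over the unit sphere of $V$, and the bookkeeping between the order $m$ of the effective derivative and the target power $r^l$.
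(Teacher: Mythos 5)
Your overall strategy is sound and runs parallel to the paper's: both proofs reduce the lemma, via Lemma~\ref{L::norm}, to a lower bound on $\sup_{x\in B(0,r)}\|\pi_\G f(x)\|$, hence to a scalar estimate on $\sup_{x\in B(0,r)}|(w,f(x))|$ that is uniform over unit vectors $w$ in $V$. Your way of eliminating $t$ (weighted AM--GM, valid for all $t$, at the cost of the harmless exponent $p/(n+1)$) replaces the paper's simpler observation that $\|h_t(x)\G\|\geq\|\pi_\G f(x)\|$ for $t\geq 0$, and your treatment of the scalar bound (compactness on the unit sphere of $V$, equivalence of norms on the space of polynomials of degree $\leq l$, uniform Taylor remainder) is more systematic than the paper's argument, which fixes a maximising pair $(v_0,x_0)$ and scales along the ray $t\mapsto tx_0$. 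Both proofs equally gloss over the factor $\|\G\|$ by restricting to (sub)lattices of the fixed integer lattice, so that is not a point of difference.

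There is, however, one genuine gap: your claim that ``by $l$-curvature the derivatives $\d^j f(0)$ with $1\leq|j|\leq l$ span $V$''. The paper's definition of $l$-curved requires the derivatives with $|j|\leq l$ --- \emph{including} $j=0$, i.e.\ the vector $f(0)$ itself --- to generate $V$, and the order-zero term can be indispensable. Concretely, take $d=1$, $n=2$, $l=1$ and $f(x)=(1,x)$: this map is $1$-curved in $V=\RM^2$ at the origin (condition i) holds trivially, and $\{f(0),\d_x f(0)\}=\{(1,0),(0,1)\}$ spans $\RM^2$), and $f(0)\neq 0$, so it satisfies all hypotheses in force; yet $\d_x f(0)=(0,1)$ alone spans only a line, the map $w\mapsto (w,\d_x f(0))$ is not injective on $V$, and your constant $\mu$ equals $0$ (take $w=(1,0)$). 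For such $w$ your chain of inequalities produces nothing, even though the lemma's conclusion holds --- precisely because of the term $|(w,f(0))|=1$ that you discarded. The repair is immediate and costs nothing: run the compactness argument with the full family $0\leq|j|\leq l$, for which injectivity on $V$ is exactly condition ii) of the definition, and note that the case $m=0$ of your reverse-Markov inequality is the trivial bound $\sup_{B(0,r)}|P|\geq|P(0)|$, so the final estimate $\sup_{B(0,r)}|\phi_w|\geq c\mu r^m-\omega(r)r^l\geq\tfrac12 c\mu r^l$ goes through unchanged. This is exactly the term the paper keeps: its non-vanishing claim concerns \emph{all} Taylor coefficients $a_I(v_0)$ with $|I|\leq l$, the constant term included.
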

\begin{proof}
Without loss of generality, we may assume that $R \leq 1/3$.
Denote respectively by $S_\G \subset V$ and $S_r $ the unit spheres in $\G \otimes \RM$ and in $\RM^n$. We have the estimates
$$\| h_t(x)\G \| \geq \| \pi_\G(f(x))\| \geq \sup_{(v,x) \in S_\G \times S_r}  (v,f(x)) $$
 Fix $(v_0,x_0) \in S_\G \times S_R$ for which $  (v,f(x))$ gets its maximum.
 
As $f$ is $C^l$, the Taylor expansion at the origin of the function
$$x \mapsto (v_0,f(x))$$
is of the form
$$(v_0,f(x))=\sum_{| I | \leq l }a_I(v)x^I+o(\| x \|^l) $$
Note that all $a_I(v)$'s cannot be equal to zero since $f$ is $l$-curved.
For any $\e$, there exists a constant $r_0$ such that
$$| (v_0,f(x))-\sum_{| I | \leq l}a_I(v_0)x^I | \leq   (v_0,f(x_0))\| x \|^l  $$ 
for  any $x \in B(0,r_0)$. 
We have the estimates
$$(v_0,f(tx_0)) \geq \sum_{| I | \leq l }t^{|I| }a_I(v)x_0^I -  (v_0,f(x_0)) \| x_0\|^l t^l \geq  \left( 1-2\| x_0\|^l\right)    (v_0,f(x_0))  t^l$$
and as $R<1/3$ this gives
$$(v_0,f(tx_0)) \geq \frac{1}{3} (v_0,f(x_0))  t^l$$
Now define 
$$ r:=\frac{t}{\| x_0 \|},\ x_r:=r \| x_0 \| \in B(0,r) .$$
We have 
$$\sup_{x \in B(0,r)}\| h_t(x)\G \| \geq   (v_0,f(x_r)) \geq   \frac{\| x_0 \|^l  (v_0,f(x_0))}{3}  r^l .  $$
This concludes the proof of the lemma.
 \end{proof}

We now apply the Kleinbock-Margulis theorem~:
 \begin{proposition}
    For any $l$-curved mapping
   $$f=(f_1,\dots,f_n):B(0,3R) \to \RM^n, f(0) \neq 0  $$
    there exist constants $A,C,r_0 >0$ such that
 $$\Vol(\{ x \in B(0,r): \dt([g_tf(x)]) \leq \e \}) \leq C \left(\frac{\e}{r^l}\right)^{1/dl} r^{d}$$
 for any $r \leq r_0$, any $\e \leq Ar^l$ and any $t \geq 0$.
 \end{proposition}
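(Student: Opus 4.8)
The plan is to obtain this proposition by a direct application of the Kleinbock--Margulis theorem (Theorem~\ref{T::KM}) to the family $h_t$, since the two hypotheses of that theorem have, in effect, already been established in the preceding lemmas. The first thing I would do is record the identification that puts the statement into the required shape. Let $\G$ be the discrete subgroup of $V$ generated by $e_1,\dots,e_k$ and set $r=\dim V$, so that $\G$ has rank $r$. As noted above, $h_0(x)\G=[f(x)]$, and since the integer combinations of the $e_i$ are sent by $h_t(x)$ to $g_t$ applied to the corresponding point of $[f(x)]$, one has $h_t(x)\G=g_t[f(x)]$ and therefore
$$\dt([g_tf(x)])=\dt(g_t[f(x)])=\dt(h_t(x)\G).$$
Identifying $V$ with $\RM^r$ through the basis $e_1,\dots,e_k$, bounding the volume of $\{x\in B(0,r):\dt([g_tf(x)])\le\e\}$ is literally bounding the sublevel set of $x\mapsto\dt(h_t(x)\ZM^r)$ appearing in the conclusion of Theorem~\ref{T::KM}.

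Next I would check the two hypotheses, both of which are already available. The first hypothesis, that $x\mapsto\|h_t(x)\G'\|$ be of $(C,\tau)$-class for every primitive $\G'\in\Lt^r$, is exactly the output of Lemma~\ref{L::norm} combined with Corollary~\ref{C::KM}: these furnish constants $C,\tau$, valid on a fixed neighbourhood of the origin and uniformly over all discrete subgroups, where one may take $\tau=1/dl$ after lowering the exponent so that a single value serves all the relevant Taylor orders $k\le l$ (permissible since a $(C,\tau)$-class function is also of $(C,\tau')$-class for $\tau'\le\tau$ once $C\ge1$). The second hypothesis is the lower bound just proved, $\sup_{x\in B(0,r)}\|h_t(x)\G'\|\ge Ar^l$, again uniform in $\G'$, so one takes $\rho=Ar^l$. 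I would stress that this bound, together with the derivative bounds entering $C$ and $\tau$, are independent of $t\ge0$, which is precisely what produces a $t$-independent conclusion.

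Finally I would invoke Theorem~\ref{T::KM} with its radius taken equal to $r$: provided $r$ is small enough that $B(0,3^r r)$ lies inside the neighbourhood on which the $(C,\tau)$-class property holds, the theorem yields
$$\Vol(\{x\in B(0,r):\dt(h_t(x)\ZM^r)\le\e\})\le C'\Big(\frac{\e}{Ar^l}\Big)^{1/dl}r^d$$
for all $\e\le\rho=Ar^l$. Absorbing $C'A^{-1/dl}$ into a single constant $C$ and using the identification above gives exactly the asserted estimate, for $r\le r_0$, $\e\le Ar^l$ and all $t\ge0$. I expect the only genuine difficulty here to be bookkeeping rather than analysis: one must ensure that the small radius $r_0$ and the constants $A,C,\tau$ can all be chosen uniformly over every primitive subgroup $\G'\in\Lt^r$ (not merely the fixed $\G$) and uniformly in $t\ge0$, and that the rescaling by $3^r$ demanded in the hypotheses of Theorem~\ref{T::KM} is compatible with the neighbourhood on which the class estimate was established. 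No analytic input beyond the lemmas already proved should be required.
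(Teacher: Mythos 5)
Your proposal is correct and takes essentially the same route as the paper: there, the proposition is obtained precisely by applying Theorem~\ref{T::KM} to the family $h_t$, with the $(C,\tau)$-class hypothesis supplied by Lemma~\ref{L::norm} together with Corollary~\ref{C::KM} and the lower bound $\rho=Ar^l$ supplied by the preceding lemma. Your extra remarks (the identification $\dt([g_tf(x)])=\dt(h_t(x)\G)$, the choice $\tau=1/dl$, and the uniformity in the subgroup and in $t\geq 0$) merely make explicit what the paper leaves implicit.
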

 In the context of Lemma \ref{L::arithmetique}, we have~:
 $$\left\{ \begin{matrix}\e&=&\sqrt{2} \left(a\| i \|^n\right)^{\frac{1}{n+1}} ; \\ t &=&\frac{1}{n+1}\log \frac{\| i \|}{a}   \end{matrix} \right. $$
 thus according to the proposition above, there exist  constants $A',C'$ such that
 $$\Vol(\{ x \in B(0,r): |(f(x),i)| \leq a \}) \leq C' \left(a \| i \|^n \right)^{\frac{1}{dl(n+1)}} r^{-1/d}\Vol(B(0,r)) $$
 provided that 
 $$\left\{ \begin{matrix}\left(a\| i \|^n\right)^{\frac{1}{n+1}}& \leq &A'r^l\ ; \\  a & \leq & \| i \|.\end{matrix} \right. $$
 This proves Proposition~\ref{P::KM}.
\section{Proof of Theorem \ref{T::arithmetique}}
Denote by $[\cdot ]$ the integer value and consider the map
$$\p:\ZM^n \to \NM,\ i \mapsto \left[{\log}_2 \| i \| \right]+1. $$  
For $i \in \ZM^n$, $\p(i)$ is the smallest natural number such that $i$ is contained in the ball of radius $2^{\p(i)}$ centred at origin.

Define the sequence $\rho=(\rho_k)$ by
$$\rho_k:= 2^{-kn-kl(n+1)-(n+1)^2 kdl}a_k^{(n+1)l-1} $$
so that $a'_k=\rho_k a_k.$

Fix $i \in \ZM^n $ and put $k:=\p(i)$. The set
$$ M_i:=\{\b \in  \RM^{n}: \left| (\b,i) \right| <  \rho_k a_k \} $$
is a band of width $2\rho_k a_k/\| i \|$ and the union over the $i$'s of the subsets $M_i$ is the complement of the arithmetic class $\Ct(\rho a)$~:
$$\RM^n \setminus \Ct(\rho a)=\bigcup_{i \in \ZM^n} M_i .$$

\begin{figure}[ht]
\centerline{\epsfig{figure=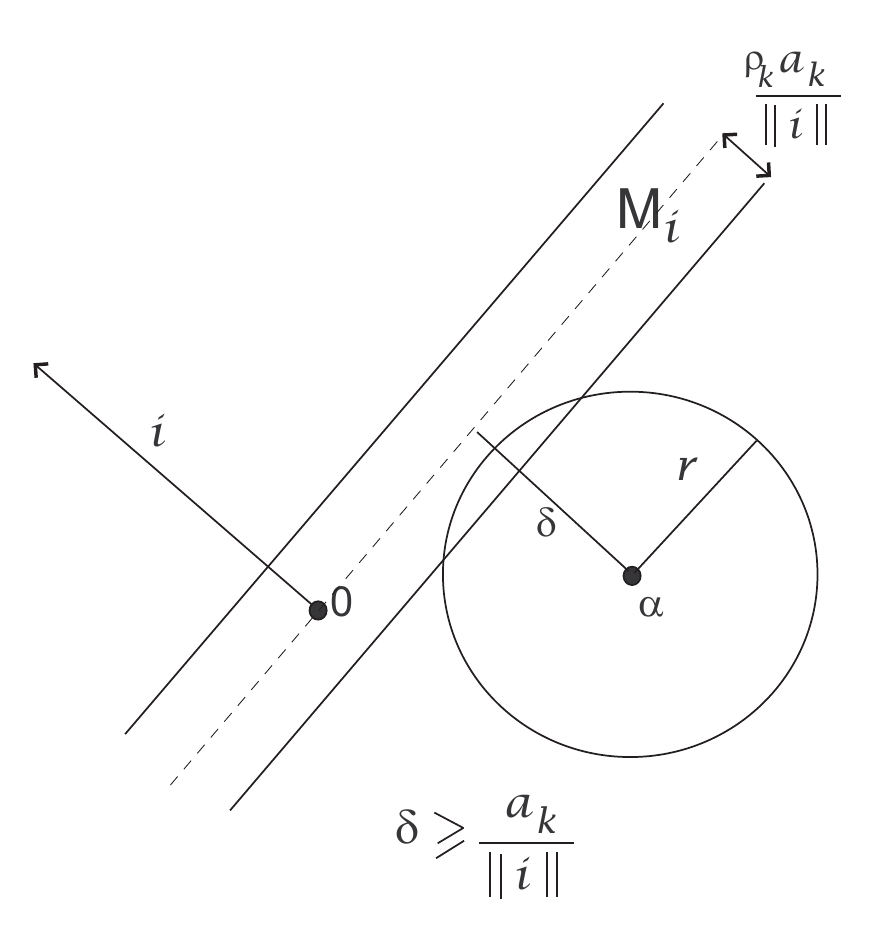,height=0.52\linewidth,width=0.48\linewidth}}
\end{figure} 

 Let $\a$ be a vector in  $ \Ct(a)$. Denote by $\dt_i$ the distance from $\a$ to the hyperplane orthogonal to the vector $i \in \ZM^n$.
 The intersection of the set $M_i$ with the ball $B(\a,r)$ can possibly be non-empty only if
  $$r > \dt_i-\frac{\rho_k a_k}{\| i \|}.  $$
 As $\a \in \Ct(a)$, we have
 $$|(\a,i)| \geq a_k $$
 thus
 $$\dt_i \geq \frac{a_k}{\| i \|} \geq \frac{a_k}{2^k}  $$
 and therefore
$$ \frac{(1-\rho_k)a_k}{2^k} < r.$$

As the sequence $\rho$ is summable, there exists an integer $N$ such that
$$\rho_k<\frac{1}{2},\ \forall k \geq N. $$
Choose
$$r <\inf \lb \frac{(1-\rho_k)a_k}{2^k}: k \leq N \rb$$
then 
$$ \p(i)<N \implies M_i \cap B(\a,r)=\emptyset.$$ 

This shows that if $M_i$ intersects the ball $B(\a,r)$ then the vector $i \in \ZM^n$  belongs to the set
 $$I_r:=\{ i \in \ZM^n: \frac{a_k}{2^{k+1}} < r,\ k :=\p(i) \}. $$
 In particular, when $r$ tends to zero, the elements $i \in I_r$ define values of $\p(i)$ which go to infinity.
 
As the mapping $f$ is $l$-curved, according to Proposition \ref{P::KM}, there exist constants $A,C$ such that
  $$\Vol (B(0,r) \cap f^{-1} (M_i)) \leq C( a_k \rho_k \| i \|^n r^{-l(n+1)})^{\frac{1}{dl(n+1)}}  \Vol(B(0,r)), $$
  provided that
  $$\left\{ \begin{matrix} \left(\rho_k a_k \| i \|^n\right)^{\frac{1}{n+1}}& \leq &Ar^l\ ; \\  \rho_k a_k & \leq & \| i \|.\end{matrix} \right. $$
  Without loss of generality, we may assume that the sequence $a$ is bounded from above by $1$. In that case, by definition of $\rho_k$, the second condition is automatically satisfied.  I assert that, for $r$ small enough, the first condition is also fulfilled. 
  As 
  $$\rho_k=2^{-kn-kl(n+1)-(n+1)^2 kdl}a_k^{(n+1)l-1} ,$$
  we have
 $$\left( \rho_k a_k \| i \|^n\right)^{\frac{1}{n+1}} \leq 2^{-kdl(n+1)} a_k^l,$$
 with $k=\p(i)$.  As the vector $i \in \ZM^n$ belongs to $I_r$, we get that 
 $$ 2^{-kdl(n+1)} a_k^l \leq  2^{-kl(d(n+1)-1)+l} r^l \leq  2^{-kl+l} r^l .$$
 The coefficient behind $r^l$ goes to zero as $k$ goes to infinity, this proves the assertion.
 
 As $i \in \ZM^n$ belongs to $I_r$, we have
 $$r^{-l(n+1)} \leq 2^{(k+1)l(n+1)}a_k^{-l(n+1)}, $$
 with $k=\p(i)$. Thus, we get the estimate
 $$\Vol (B(0,r) \cap f^{-1} (M_i)) \leq C(2^{kn+kl(n+1)+l(n+1)} a_k^{1-l(n+1)} \rho_k)^{\frac{1}{dl(n+1)}} \Vol(B(0,r)) $$
 for $r$ small enough. Using the definition of $\rho$, the expression between brackets can be simplified, namely~:
 $$ (2^{kn+kl(n+1)+l(n+1)} a_k^{1-l(n+1)} \rho_k)^{\frac{1}{dl(n+1)}}= 2^{1/d} 2^{-k(n+1)} $$
 As the map
 $$f:\RM^d\to \RM^n $$
is differentiable, by the mean-value theorem,  there exists a constant $\kappa$ such that
 for any sufficiently small $r$
 $$f(B(0,r)) \subset B(\a,{\kappa r}),\ f(0)=\a. $$
 In particular
  $$f^{-1} (M_i) \cap B(0,r) \neq \emptyset \implies i \in I_{\kappa r},$$
  for any sufficiently small $r$.
 
 This shows that the measure of the complement to   $f^{-1}(\Ct(\rho a))$ in $B(0,r)$ is bounded from above by
 $$C2^{1/d} \Vol(B(0,r))\sum_{i \in I_{\kappa r}}2^{-\p(i)(n+1)} .$$
 
 We have
$$\#\{ \p(i)=k \}=\#\{ \p(i)\leq k\}-\#\{ \p(i)\leq k-1\} \leq 2^{(k+1)n} $$
where the symbol $\#-$ stands for the cardinal. This shows that
$$\sum_{i \in \ZM^n}2^{-\p(i)(n+1)}  \leq    \sum_{k \geq 0}2^{(k+1)n} 2^{-k(n+1)}=\frac{2^n}{1-2^{-1}},$$
therefore the sums
$$\sum_{i \in I_{\kappa r}}2^{-\p(i)(n+1)} $$
converge to $0$ as $r$ tends to zero. This concludes the proof of Theorem~\ref{T::arithmetique}. 

   \noindent {\bf Acknowledgements.} { The idea of arithmetic density originated from a short discussion with J.-C. Yoccoz which I thank sincerely. Many thanks to D. Kleinbock,  and B. Weiss for explanations on diophantine approximation, to F. Jamet, Remarque and R. Uribe for remarks concerning measure theory, to F. Aicardi  for the picture which illustrate the proof of the theorem and to Duco van Straten for suggestions on the writing of this paper. Thanks also to B. Fayad, who pointed out to me a mistake in the original version of the paper.
    
This research is supported by the  Max Planck Institut für Mathematik in Bonn and by the Deutsche Forschungsgemeinschaft project, SFB-TR 45, M086, {\em Lagrangian geometry of integrable systems.}}

 \bibliographystyle{amsplain}
\bibliography{master}
 \end{document}